\newtheorem{theorem}{Theorem}[section]
\newtheorem{corollary}[theorem]{Corollary}
 \newtheorem{proposition}[theorem]{Proposition}
 \theoremstyle{definition}
 \theoremstyle{remark}
 \newtheorem{remark}[theorem]{Remark}
 \numberwithin{equation}{subsection}
\newcommand{\bz}{\mathbb Z}
\newcommand{\bq}{\mathbb Q}
\newcommand{\bak}{\operatorname{\text{BCH}}}
 \newcommand{\dlie}{\partial }
 \newcommand{\lib }{\mathbb{L}}
  \newcommand{\cyl }{\textrm{Cyl}\,}
 \newcommand{\ad }{\text{ad}}
 \newcommand{\cls }{\text{cls}}
 \newcommand{\map}{\text{map}}
    \newcommand{\lasu}{{\mathfrak{L}}}
\newcommand{\ees}{su}
\newcommand{\aes }{u}
\newcommand{\bes }{u'}
\newcommand{\equis }{x}
\newcommand{\ye }{y}
\newcommand{\yxbchl }{\bak_\ye( \ye, \equis )}
\begin{document}

\title{The Lawrence-Sullivan construction is the right model of $I^+$}


\author{Urtzi Buijs\footnote{Partially supported by the {\em MICIN}  grant MTM2010-15831 and by the Junta de
Andaluc\'\i a grant FQM-213} $$  and Aniceto Murillo\footnote{Partially supported by the {\em MICIN}  grant MTM2010-18089 and by the Junta de
Andaluc\'\i a grants FQM-213 and P07-FQM-2863. \vskip 1pt 2000 Mathematics Subject
Classification: 55P62, 11B68.\vskip
 1pt
 Key words and phrases: Lawrence-Sullivan model, DGL cylinder, DGA cylinder,
rational homotopy theory, Euler identity.}}



\maketitle

\begin{abstract}
We prove that the universal enveloping algebra of the Lawrence-Sullivan construction is a particular perturbation of the complete Baues-Lemaire cylinder of $S^0$. Together with other evidences we present, this exhibits the Lawrence-Sullivan construction as the right  model of $I^+$.  From this, we also deduce a generalized Euler formula on Bernoulli numbers.
\end{abstract}

\section{Introduction}\label{intro}
In \cite{lasu} R. Lawrence and D. Sullivan constructed what they called a {\em free Lie model for the interval}. It is a complete free differential Lie algebra $\lasu=(\widehat\lib(a,b,z),\partial)$ in which $a$ and $b$ are {\em Maurer-Cartan} or {\em flat} elements and the differential on $z$ is the only one which makes $a$ and $b$ gauge equivalent (see next section for a precise definition). In very general terms, it is based on the idea that $0$-simplices (points) are represented by Maurer-Cartan elements and $1$-simplices (paths) correspond to {\em gauge transformations} between the endpoints.

  In this beautiful construction and the deep ideas behind it there is, however, a particular Maurer-Cartan element which has not received attention, namely $0\in \lasu_{-1}$. Taking that into account we find three Maurer-Cartan elements $0,a,b\in \lasu_{-1}$, and thus three points in the space that it represents, together with a gauge transformation from $a$ to $b$ representing a path joining two of the three vertices. Hence, what $\lasu$ really models is $I^+=I\stackrel{\cdot}{\cup} \{*\}$, the disjoint union of the interval and an exterior point.

  That the spatial realization of this complete Lie algebra is in fact of the homotopy type of two contractible components is a particular instance of the more general context of homotopy theory in the category of (unbounded) differential graded Lie algebras, or more generally, $L_\infty$ algebras \cite{bumu}.

  In this paper we give to the Lawrence-Sullivan construction the consideration that it deserves by proving that it satisfies essential functorial properties which reflect the role of $I^+$ in the based homotopy category. We begin by observing (see Proposition \ref{propo}) that the differential in $\lasu$ is characterized by being the only one for which the following holds:  two Maurer-Cartan elements $x,y\in L_{-1}$ of a given differential graded Lie algebra $L$, DGL henceforth, are gauge equivalent if and only if there exist a morphism of differential graded algebras $ \lasu\to L $ sending $a$ to $x$ and $b$ to $y$. This readily implies, as we will remark, that the classical Quillen notion of homotopy of morphisms corresponds, under this scope,  to the standard homotopy notion in the based category in which $S^0\wedge I=I^+$ is the right cylinder. In other words, $\lasu$ is a good cylinder of the model of $S^0$ given by $(\lib(u),\partial)$, $\partial(u)=-\frac{1}{2}[u,u]$.

   Our  main result, Theorem \ref{teorema}, corroborates this assertion. We show that the universal enveloping algebra of $\lasu$ is a complete tensor algebra whose differential is a perturbation of the classical Baues-Lemaire cylinder \cite{bale} of the tensor model of $S^0$ given by $T(u)$, with $u$ of degree $-1$ and  $d(u)=-u\otimes u$.

    With all of the above a model of $I$ is then $(\lib(a,z),\partial)$,  obtained as the quotient of $\lasu$ by $(\lib(b),\partial)$. Theorem \ref{teorema2} illustrates this situation.

    We finish with an application of Theorem \ref{teorema} in number theory by obtaining, in the same spirit as in \cite{patan}, a  generalized {\em Euler formula} on Bernoulli numbers.

 \section{Notation and tools}

In this note, the coefficient field $\mathbb K$ is assumed to be of characteristic zero. Differential Lie algebras, DGL's from now on, as well as  any other graded object, are considered $\bz$-graded. Given a graded vector space $V$, $\lib(V)$ denotes the {\em free lie algebra} generated by $V$. If in the tensor algebra $T(V)=\sum_{n\ge0}T^n(V)$ we consider the Lie structure given by commutators, $\lib(V)$ it is the Lie subalgebra generated by $V$. In the same way, replacing $T(V)$ by the {\em complete tensor algebra} $\widehat{T}(V)=\Pi_{n\ge0}T^n(V)$, we obtain $\widehat\lib(V)$, the {\em complete free Lie algebra} generated by $V$. A generic element of $\widehat T(V)$ will be written as a formal series $\sum_{n\ge 0}\phi_n$, $\phi_n\in T^n(V)$. Note that $T(V)\subset \widehat T(V)$ and $\lib(V)\subset\widehat\lib(V)$. The universal enveloping algebra $U\lib(V)$ of  $\lib(V)$ extends to the complete  free Lie algebra to produce a graded algebra $U\widehat \lib(V)$ naturally isomorphic to $\widehat T(V)$.

 A {\em Maurer-Cartan} or {\em flat} element of a given DGL is a  degree $-1$ element $a$ which satisfies $\partial(a)=-\frac{1}{2}[a,a]$. We denote by $MC(L)$  the set of all Maurer-Cartan elements of  $L$.

Let  $L$ be either a complete free Lie algebra or any DGL in which the adjoint action of  $L_0$ is locally nilpotent, i.e., for any $x\in L_0$ there is an integer $i$ such that $\ad_x^i=0$. The {\em gauge action} of $L_0$ on $MC(L)$  \cite{lasu,mane} is defined as follows: given $x\in L_0$ and $a\in MC(L)$,
$$x*a=e^{\ad_x}(a)-\frac{e^{\ad_x}(\partial x)-(\partial x)}{\ad_x(\partial x)}=e^{\ad_x}(a)-f_x(\partial x),$$
where $e^{\ad_x}=\sum_{n\ge0}\frac{(\ad_x)^n}{n!}$ and, as operator,
$$f_x=\frac{e^{\ad_x}-id}{\ad_x}.
$$
Explicitly,
$$
x*a=\sum_{i\ge0}\frac{\ad_x^i(a)}{i!}+\sum_{i\ge0}\frac{\ad_x^{i}(\partial x)}{(i+1)!}.
$$
This can also be introduced in this way: consider $L\otimes  \Lambda t=L[t]$ the DGL   in which $t$ has degree zero, the Lie bracket is given by the one in $L$ and multiplication on $\mathbb K[t]$, and the differential arises from that on $L$ and setting $\partial t=0$. Then, $x*a=p(1)$ where $p(t)\in  L[t]$ is the only formal power series with coefficients in $L$ which is solution of the differential equation:
$$
\begin{aligned}
&p'(t)=\partial x -\ad_xp(t),\\
&p(0)=a.
\end{aligned}
$$
Indeed, if we write $p(t)=\sum_{i\ge0}a_it^i$, $a_i\in L$, the only solution for this equation is given recursively by
$$
a_0=a,\qquad a_1=\partial x -\ad_x(a),\qquad a_{n+1}=-\frac{\ad_x(a_n)}{n+1}.
$$
That is,
$$
a_n=\frac{(-\ad_x)^{n-1}}{n!}(\partial x)+\frac{(-\ad_x)^{n}}{n!}(a),
$$
and therefore $p(1)=e^{\ad_x}(a)-f_x(\partial x)$

The gauge action determines an equivalence relation among flat elements which coincide with the usual homotopy relation  on $MC(L)$ (see for instance \cite[Theorem 5.5]{mane}) that we now recall. Two elements $u,v\in MC(L)$ are said to be homotopic, and we write $u\sim v$, if there is a Maurer-Cartan element $\phi\in L\otimes \Lambda(t,dt)$ such that $\varepsilon_0(\phi)=u$ and $\varepsilon_1(\phi)=v$. Here, $\Lambda(t,dt)$ is the free commutative algebra generated by $t$ and $dt$, of degrees $0$ and $1$ respectively, and $\varepsilon_0,\varepsilon_1\colon  L\otimes \Lambda(t,dt)\to L$ are the DGL morphisms obtaining evaluating $t$ at $0$ and $1$ respectively.

The {\em Lawrence-Sullivan construction}, denoted by $\lasu$, is the complete free DGL $(\widehat\lib(a,b,z),\partial)$ in which $a$ and $b$ are flat elements and
$$
\partial(z)=[z,b]+\sum_{i=0}^\infty\frac{B_i}{i!}\ad_z^i(b-a),
 $$
 where $B_i$ denotes the $i^{\text{th}}$ Bernoulli number. Equivalently, as shown in \cite{lasu},
 $$\partial z=\ad_z(b)+f_z^{-1}(b-a),$$
 where, again as operator,
 $$
 f_z^{-1}=\frac{\ad_z}{e^{\ad_z}-id}.
 $$
  Observe that $f_z^{-1}$ is indeed the inverse operator of $f_z$. Another inductive description of the differential in this complete free Lie algebra was suggested in \cite{lasu} and shown to be equivalent to the above in \cite[Main Theorem]{patan}.

  Last result of next section assumes basic knowledge on $L_\infty$ algebras. From the rational homotopy theory point of view on these objects, we refer, for instance, to \cite{ge}.

\section{The DGL cylinder of  $S^0$ and a model of the interval}

We begin by the following observation:

\begin{proposition}\label{propo}
Two Maurer-Cartan elements $u,v\in L_{-1}$ are homotopic if and only if there is a DGL morphism $\Phi\colon\lasu\to L$ such that $\Phi(a)=u$ and $\Phi(b)=v$.
\end{proposition}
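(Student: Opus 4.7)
The plan is to reduce the statement to the known equivalence between gauge equivalence and homotopy equivalence of Maurer-Cartan elements, exploiting the fact that the differential on $z$ in $\lasu$ was designed precisely to encode the gauge action of $z$ sending $a$ to $b$.

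First I would treat the forward direction. Suppose $\Phi\colon\lasu\to L$ is a DGL morphism with $\Phi(a)=u$ and $\Phi(b)=v$, and set $w=\Phi(z)\in L_0$. Applying $\Phi$ to the defining relation
\[
\partial z=\ad_z(b)+f_z^{-1}(b-a)
\]
and using that $\Phi$ commutes with the differential and with iterated brackets (and hence with the formal series built out of them, interpreted in $L$), one obtains
\[
\partial w=\ad_w(v)+f_w^{-1}(v-u),
\]
which is exactly the gauge condition $w*u=v$. By the equivalence between the gauge relation and the homotopy relation on $MC(L)$ recalled in the previous section, $u$ and $v$ are homotopic.

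For the converse, suppose $u\sim v$. The same equivalence provides an element $w\in L_0$ satisfying $w*u=v$, i.e.\ the displayed equation above. I would then define $\Phi$ on generators by $\Phi(a)=u$, $\Phi(b)=v$, $\Phi(z)=w$ and extend it as a morphism of (complete) free Lie algebras. To verify that $\Phi$ is a DGL morphism it suffices, by freeness, to check compatibility with the differential on the three generators: on $a$ and $b$ this is immediate because $u$ and $v$ are Maurer-Cartan, and on $z$ it is exactly the gauge relation satisfied by $w$.

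The main obstacle is really a matter of well-definedness rather than of content: the image under $\Phi$ of a formal series of iterated brackets must converge in $L$, and likewise the expression $f_w^{-1}(v-u)$ must make sense there. This is handled by the local nilpotence assumption under which the gauge action was set up in the previous section; once it is in force, both the extension of $\Phi$ to $\widehat{\mathbb{L}}(a,b,z)$ and the compatibility with $\partial$ on $z$ reduce cleanly to the gauge equation, so the whole argument is a direct translation between Lawrence--Sullivan's differential, the gauge action, and the homotopy relation on Maurer-Cartan elements.
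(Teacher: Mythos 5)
Your proposal is correct and follows essentially the same route as the paper: apply $\Phi$ to the defining relation for $\partial z$ to read off the gauge equation for $w=\Phi(z)$, invoke the equivalence of the gauge and homotopy relations on $MC(L)$, and reverse the computation for the converse. The only quibble is the direction of the gauge action: the equation $\partial w=\ad_w(v)+f_w^{-1}(v-u)$ rearranges to $u=e^{\ad_w}(v)-f_w(\partial w)$, i.e.\ $w*v=u$ rather than $w*u=v$, but since gauge equivalence is symmetric this is harmless.
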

\begin{proof}

Let
$\Phi \colon \lasu\to L$
such that $\Phi (a)=u$ and $\Phi (b)=v$, with $u,v\in MC(L)$, and  consider $w=\Phi
(z)\in L_0$. Then
\begin{eqnarray*}
\partial w &=&\Phi (\partial z)=\Phi \bigl(\ad_{z}(b)+\frac{\ad_{z}}{e^{\ad_{z}}-id}(b-a)\bigr)\\
&=&\ad_{\Phi (z)}\Phi (b)+\frac{\ad_{\Phi (z)}}{e^{\ad_{\Phi
(z)}}-id}\bigl(\Phi (b)-\Phi (a)\bigr)=\ad_w(v)+\frac{ \ad_{w}}{e^{\ad_w }-id}(v-u).
\end{eqnarray*}
Then,
$$
 \frac{e^{\ad_w}-id}{ \ad_{w}}(\partial
w)=\frac{e^{\ad_w }-id}{ \ad_{w}}(\ad_wb)+v-u\\
=e^{\ad_w}(v)-u,
$$
and thus, $u=e^{\ad_w}(v)-f_w (\partial w)$. In other words,
 $u$ and $v$ are gauge equivalent, and thus, homotopy equivalent.

Reciprocally if $u,v\in MC(L)$ are homotopic, there is  $w\in L_0$ such that
$u=e^{\ad_w}(v)-f_w (\partial w)$.
define $\Phi (b)=v$, $\Phi (a)=u$ and $\Phi (z)=w$ and the
above computation shows that this is a DGL morphism.
\end{proof}

\begin{remark} We see that $\lasu$ fits perfectly in the classical Quillen approach to rational homotopy \cite{qui} by which the homotopy category of reduced DGL's is equivalent to the based homotopy category of simply connected rational spaces. In this category, two pointed maps $f,g\colon (X,x_0)\to (Y,y_0)$ are homotopic via a based homotopy $(X\times I,x_0\times I)\to (Y,y_0)$. But this corresponds, via the exponential, to a based map $S^0\wedge I=I^+\to \map^*(X,Y)$ sending the exterior point to the constant map. In the algebraic setting, let $C$ and $L$ be a  differential graded coalgebra and a reduced DGL, models of $X$ and $Y$ respectively, with the appropriate connectivity restrictions, and  let $\varphi,\psi\colon \mathcal L(C)\to L$ be models of $f$ and $g$. Here $\mathcal L$ denotes the Classical Quillen functor, see for instance \cite[\S22]{fehatho}. It is known \cite[Theorem 10]{bufemu} that the convolution DGL $Hom(C_+,L)$ is a DGL model of the non connected pointed mapping space $\map^*(X,Y)$. Moreover, the restrictions $\varphi_{|_{C_+}},\psi_{|_{C_+}}\colon C_+\to L$ are Maurer-Cartan elements of this DGL. By the proposition above, these morphisms are homotopic if and only if there exists $\Phi\colon \lasu\to Hom(C_+,L)$ such that $\Phi(a)=\varphi_{|_{C_+}}$ and $\Phi(b)=\psi_{|_{C_+}}$.
\end{remark}

In \cite[\S1]{bale}, H. Baues and J.M. Lemaire defined a cylinder of a free differential graded algebra, DGA henceforth, $(T(V),d)$. It is
the free tensor algebra $\cyl T(V)=T(V\oplus V'\oplus sV)$ in which $V'$ is a copy of $V$ and $sV$ denotes the suspension of $V$, $(sV)_p=V_{p-1}$. The differential in $V$ and $V'$ are defined so that the inclusions $i_0, i_1 \colon T(V) \hookrightarrow T(V\oplus V'\oplus sV)$, where $i_0(v)=v$, $i_1(v)=v'$, with $v\in V$,
are DGA morphisms. Consider in $\cyl T(V)$ the  $(i_0,i_1)$-derivation $S$ of degree $-1$    given by $Sv=Sv'=sv$ and $Ssv=0$. Then, the differential in $sV$ is defined by $dsv=v-v'-Sdv$, $v\in V$.

This is a right cylinder in the category of DGA's as both $i_0,i_1$ are injective and the projection $p\colon T(V\oplus V'\oplus sV)\to T(V)$, $p(v)=p(v')=v$, $p(sv)=0$, $v\in V$, is a quasi-isomorphism for which $pi_0=pi_1=1_{|_{T(V)}}$.

When $V=\langle u\rangle$ with $u$ of degree $-1$ and $du=-u\otimes u$ we obtain $\cyl T(u) =(T(u\oplus u'\oplus su),d)$ with differential $du=-u\otimes u$, $du'=-u'\otimes u'$ and $dsu=u'-u+su\otimes u'-u\otimes su$.
 Note that $T(u)$ can be thought of as a model for $S^0$ as  $\cyl T(u) $ can be used  as a cylinder to describe the usual notion of homotopy in the category of DGA's \cite{ma}. In $\widehat T(u\oplus u'\oplus su)$ we define $D$ as the only derivation which extends $d$ in $u$ and $u'$ and,
 $$
Dsu= su\otimes u'-u'\otimes su+ \sum_{n\ge 0} \sum_{p+q=n}(-1)^q\frac{B_{n}}{p!q!}su^{\otimes p}\otimes(u'-u)\otimes su^{\otimes q}\eqno{(1)}
$$
where each $B_{n}$ denotes the $n^{th}$ Bernoulli number.
 Denote by $\widehat\cyl T(u) $ the pair $(\widehat T(u\oplus u'\oplus su),D)$. We now show that $D$ is indeed a differential for which:

\begin{theorem}\label{teorema}   $U\lasu$ is isomorphic as DGA's to $\widehat\cyl T(u)$.
\end{theorem}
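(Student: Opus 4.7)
My plan is to write down the obvious candidate and check it respects the differentials. By the identification $U\widehat\lib(V)\cong\widehat T(V)$ recalled in Section~2, the assignment $a\mapsto u$, $b\mapsto u'$, $z\mapsto su$ extends uniquely to a continuous, degree-preserving isomorphism of graded algebras
\[
\Phi\colon U\lasu\To \widehat T(u\oplus u'\oplus su).
\]
It then suffices to verify $\Phi\partial=D\Phi$ on the three generators $a,b,z$: both $\Phi\circ\partial\circ\Phi^{-1}$ and $D$ are derivations of $\widehat T(u\oplus u'\oplus su)$, and once they agree on generators they agree everywhere. As a bonus, the relation $\partial^2=0$ on $\lasu$ is transported by $\Phi$ to $D^2=0$, justifying the parenthetical assertion that $D$ is actually a differential.

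For the generators $a$ and $b$ the check is immediate: both are odd of degree $-1$, so in $U\lasu$ one has $[a,a]=2\,a\otimes a$ and $[b,b]=2\,b\otimes b$, hence $\Phi(\partial a)=-u\otimes u=Du$ and $\Phi(\partial b)=-u'\otimes u'=Du'$.

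The substance of the proof is the identity on $z$. Since $z$ has degree zero, left and right multiplication by $z$ commute in $U\lasu$, so $\ad_z=L_z-R_z$ and the binomial theorem yields
\[
\ad_z^{\,i}(y)=\sum_{p+q=i}(-1)^q\binom{i}{p}\,z^p\,y\,z^q.
\]
Substituting $y=b-a$ into $\partial z=[z,b]+\sum_{i\ge 0}\tfrac{B_i}{i!}\ad_z^{\,i}(b-a)$, simplifying $\binom{i}{p}/i!=1/(p!\,q!)$, and reindexing by $n=p+q$, gives
\[
\partial z\;=\;zb-bz\;+\;\sum_{n\ge 0}\sum_{p+q=n}(-1)^q\frac{B_n}{p!\,q!}\,z^p\,(b-a)\,z^q,
\]
which $\Phi$ turns term-by-term into the formula~(1) defining $Dsu$. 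The only real obstacle is pattern recognition rather than analysis: one has to see that the apparently exotic symmetric Bernoulli sum in~(1) is exactly the image in $U\lasu$ of the compact Lie-theoretic expression $\ad_z\cdot(e^{\ad_z}-id)^{-1}(b-a)$ used to define $\partial z$ in $\lasu$. Once the closed form of $\ad_z^{\,i}$ on an even generator is noted, matching the two sides is a mechanical bookkeeping.
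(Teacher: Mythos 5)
Your proposal is correct and follows essentially the same route as the paper: both arguments reduce to checking that the PBW isomorphism intertwines the differentials on the generators, with the only substantive step being the identification of the symmetric Bernoulli sum in formula (1) with $\ad_z(b)+\sum_i\frac{B_i}{i!}\ad_z^i(b-a)$ via the binomial expansion of $\ad_z^i=(L_z-R_z)^i$ in the tensor algebra. The paper merely runs this computation in the opposite direction (from (1) back to $\partial z$), and likewise uses the transported relation $\partial^2=0$ to conclude that $D$ is a differential.
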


\begin{proof}We show that the natural isomorphism of graded algebras $U\lasu\cong \widehat\cyl T(u) $  commutes with differentials on generators. This also guarantees that $D$ is indeed a differential. Write $Du=du=-u\otimes u= -\frac{1}{2}u\otimes u - \frac{1}{2}u\otimes u $ which only arises from $-\frac{1}{2}[a,a]=\partial a$ via the injective map $\lasu\hookrightarrow U\lasu\cong \widehat\cyl T(u) $ provided by the Poincar\'e-Birkhoff-Witt theorem. The same applies to $Du'$ and $\partial b$.

On the other hand, if we denote by $\cls$ the class of the corresponding element of $U\lasu=\widehat T(\lasu)/\sim$, we have:

$$
\begin{aligned}
\cls\{Dsu\}&=\cls\{su\otimes u'-u'\otimes su+ \sum_{n\ge 0} \sum_{p+q=n}(-1)^q\frac{B_{n}}{p!q!}su^{\otimes p}\otimes(u'-u)\otimes su^{\otimes q}\}=\\
&=\cls\{\ad_{su}u'+\sum_{n\ge 0} \sum_{p+q=n}(-1)^q\frac{B_{n}}{n!}\binom{n}{q}su^{\otimes p}\otimes(u'-u)\otimes su^{\otimes q}\}=\\
&=\cls\{\ad_{su}u'+\sum_{n=0}^\infty \frac{B_n}{n!}\sum_{k=0}^n(-1)^k\binom{n}{k}su^{\otimes n-k}\otimes{(u'-u)}\otimes su ^{\otimes k}\}=\\
&=\cls\{\ad_{su}u'+\sum_{n=0}^\infty \frac{B_n}{n!}[\underbrace{su ,[su ,\dots ,[su }_{n},{u'-u} ]]\dots ]\}=\\
&=\cls\{\ad_{su}u'+\sum_{n=0}^\infty \frac{B_n}{n!}\text{ad}_{su}^n({u'-u} )\}
\end{aligned}
$$

Again, by the Poincar\'e-Birhoff-Witt theorem, this class only arises from $\partial z$ in the injection $\lasu\hookrightarrow U\lasu\cong \widehat\cyl T(u) $ and the theorem is proved.

\end{proof}

Recall that, in a complete tensor algebra $\widehat T(V)$, the {\em Baker-Campbell-Hausdorff formula} of $x,y\in V$ reads,
 $$
{\bak}(y,x)= \sum_{k=1}^\infty \frac{(-1)^{k-1}}{k}\Bigl(
\sum_{\stackrel{p,q=0}{p+q>0}}^\infty
\frac{y^px^q}{p!q!}\Bigr)^k=\sum_{k=1}^\infty\frac{(-1)^{k-1}}{k} \sum
\frac{y^{p_1}x^{q_1}\cdots y^{p_k}x^{q_k}}{p_1!q_1!\cdots
p_k!q_k!}
$$
where in the last term, the internal sum is taken over all possible
collections $(p_1, \dots, p_k, q_1,\dots ,q_k)$ of integral
non-negative numbers such that
$p_1+q_1>0, \dots , p_k+q_k>0$. Here, and in what follows, for simplicity in the notation, we have omitted $\otimes$ in  all tensor products. As in the classical case, this formula is obtained as
$$
{\bak}(y,x)=\log(e^ye^x)$$
where
$$e^x=\sum_{n\ge 0}\frac{x^n}{n!}$$ and, for a given $\varphi\in\widehat T^+(V)=\Pi_{i\ge 1}T^i(V)$,
$$
\log(1+\varphi)=\sum_{n\ge0}(-1)^{n-1}\frac{\varphi^n}{n}.
$$
We denote by ${\bak}_y(y,x)$ the part of ${\bak}(y,x)$ obtained by considering only those summands which are linear on $y$. Then, it is known, see for instance \cite[Lecture 6]{post} for the ungraded case, that in $U\widehat\lib(V)\cong \widehat T(V)$,
 $$
\cls\{{\bak}_{y}(y,x)\}=\cls\{\sum_{i\ge 0}\frac{B_i}{i!}\ad^i_{x}y\}.
$$
Hence, from Theorem \ref{teorema} we immediately obtain:

\begin{corollary}\label{BCH} With the notation in the theorem above,
$$\bak_{u'-u}(u'-u,su)=\sum_{p+q=n}(-1)^q\frac{B_{p+q}}{p!q!}su^{\otimes p}\otimes(u'-u)\otimes su^{\otimes q}$$
and the formula for $Dsu$ in equation $(1)$ can be rewritten as,
$$
Dsu=su\otimes u'-u'\otimes su+\bak_{u'-u}(u'-u,su).
$$
\hfill$\square$
\end{corollary}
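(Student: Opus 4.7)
The plan is to combine two ingredients that are already in hand. The first is the closed-form identity
$$\cls\{\bak_y(y,x)\} \;=\; \cls\Bigl\{\sum_{i\ge 0}\frac{B_i}{i!}\ad_x^i y\Bigr\}$$
stated just before the corollary, which under the isomorphism $U\widehat\lib(V)\cong \widehat T(V)$ becomes an identity of honest tensors in $\widehat T(V)$. The second is the standard tensor expansion of the iterated adjoint, namely
$$\ad_{su}^{\,n}(v) \;=\; \sum_{k=0}^{n}(-1)^k\binom{n}{k}\,su^{\otimes(n-k)}\otimes v\otimes su^{\otimes k},$$
obtained by induction on $n$ from $\ad_{su}(v)=su\otimes v - v\otimes su$.

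For the first formula, I would specialize the identity to $y=u'-u$ and $x=su$, substitute the tensor expansion of $\ad_{su}^{\,n}(u'-u)$, and reindex with $p=n-k$, $q=k$. The factor $\binom{n}{k}/n!=1/(p!\,q!)$ absorbs the binomial coefficient and the $n!$ from the Bernoulli term, producing exactly the right-hand side
$$\sum_{n\ge 0}\sum_{p+q=n}(-1)^q\frac{B_{p+q}}{p!\,q!}\,su^{\otimes p}\otimes(u'-u)\otimes su^{\otimes q}.$$

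For the second formula, I would not recompute from scratch: the proof of Theorem \ref{teorema} already decomposes $Dsu$ as $su\otimes u'-u'\otimes su$ plus precisely the Bernoulli sum just identified with $\bak_{u'-u}(u'-u,su)$. Substituting gives the desired reformulation of equation $(1)$. I do not foresee a genuine obstacle; the only care required is the bookkeeping of signs in the alternating binomial expansion of $\ad_{su}^{\,n}$ and the cancellation of factorials under the reindexing $(n,k)\mapsto(p,q)$, all of which is routine.
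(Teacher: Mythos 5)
Your proposal is correct and follows essentially the same route as the paper: the corollary is obtained by combining the stated identity $\cls\{\bak_y(y,x)\}=\cls\{\sum_{i\ge 0}\frac{B_i}{i!}\ad_x^i y\}$ with the binomial expansion of $\ad_{su}^n(u'-u)$ and the reindexing $\binom{n}{k}/n!=1/(p!q!)$, which is exactly the chain of equalities already written out (in reverse) in the proof of Theorem \ref{teorema}. The second displayed formula then follows by substitution into equation $(1)$, just as you say.
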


\begin{remark} (1) Observe that  the projection $p\colon\widehat\cyl(u) \to (T(u),d)$, $p(u)=p(u')=u$, $p(su)=0$,  is again a quasi-isomorphism as it is so on the indecomposables. Thus, $\widehat\cyl (u)$ is again a cylinder for $(T(u),d)$.

(2) Consider the DGL $(\lib(u),\partial)$, $\partial u=-\frac{1}{2}[u,u]$, and observe that  the universal enveloping algebra functor on the inclusions $(\lib(u),\partial)\hookrightarrow \lasu$ mapping $u$ to $a$ and $b$ respectively, provide, by the theorem above, the natural inclusions $i_0,i_1\colon (T(u),d)\hookrightarrow \widehat\cyl T(u)$. That is, the Lawrence-Sullivan construction  is a good DGL cylinder of $(\lib(u),\partial)$. Now observe that this is  a DGL model of $S^0$. Indeed,  its commutative cochain  graded algebra  \cite[\S23]{fehatho}, $C^*(\lib(u),\partial)$, can be easily computed to yield
 $(\Lambda(x,y),d)$ with $x$ and $y$ of degree $0$ and $-1$ respectively,  $dx=0$ and $dy=\frac{1}{2}(x^2-x)$. This is a model of $S^0$ under any possible interpretation. On the one hand, it is an easy exercise to show that the geometric simplicial  realization of this algebra has the homotopy type of $S^0$,  each of its points given by the augmentations sending $x$ to $0$ and $1$ respectively. On the other hand, consider the DGA model of $S^0$ given  by $\bq \alpha\oplus\bq\beta$ with $\alpha$ and $\beta$ of degree $0$, and products $\alpha^2=\alpha$, $\beta^2=\beta$ and $\alpha\beta=0$.  Note that the identity in this algebra is $\alpha+\beta$. Hence, replacing $\alpha+\beta$ by $1$ and $\beta$ by $x$,  this DGA is isomorphic to $\bq\oplus\bq x$ with $x^2=x$, which is quasi-isomorphic to $(\Lambda (x,y),d)$ with $dy=\frac{1}{2}(x^2-x)$.

 In other words, $\lasu$  geometrically describes the cylinder of $S^0$ in the pointed category, namely, $I^+$.

 (3) In the same way, a good candidate for a complete model of the interval $I$ would be then a model  of the cofibre of the based map $S^0\to I\dot\cup *$ which sends the non base point of $S^0$ to any of the endpoints of the interval. In DGL's this would be modeled by the inclusion $
   (\lib(b),\partial)\hookrightarrow \lasu
   $
  whose cofibre is
$$
\lasu_I=(\widehat{\mathbb{L}}(a,z),\partial)\quad\mbox{in which}\quad\dlie(a)=-\frac{1}{2}[a,a] \quad\mbox{and}\quad \dlie(z)=-\sum_{i\ge 0}\frac{B_i}{i!}{\rm ad}^i_z(a).
$$
\end{remark}

Again, the fact that the realization of $\lasu_I$ is a contractible space is a particular case of a much broader setting \cite{bumu}. Here, we corroborate this fact with a variant of \cite[Theorem 4.1]{bugumu} that we now present.

Let $L$ be any DGL and consider, for each $\lambda\in \mathbb K$, the DGL morphism
$$
\varepsilon_\lambda\colon L\otimes \Lambda (t,dt)\to L,\qquad \varepsilon(p(t)+q(t)dt)=p(\lambda)
 $$
 given by evaluating in $\lambda$. Here $p,q\in L\otimes \Lambda t=L[t]$ are considered, as before, polynomials in $t$ with coefficients in $L$. If we restrict to polynomials with no constant terms and evaluate in $1$ the resulting DGL morphism,
$$
\varepsilon_1\colon L\otimes \Lambda^+(t,dt)\longrightarrow L
$$
is a DGL model in the classical Quillen sense
of the evaluation fibration
$$
ev\colon \map^*(I,Y)\longrightarrow Y,\qquad ev(\gamma)=\gamma(1),
$$
whenever $L$ is a reduced DGL model of $Y$ \cite{bufemu}. Recall that $\map^*$ denotes the pointed mapping space. We will not assume here any bounding condition on $L$ but will keep in mind this geometrical interpretation.

On the other hand, consider the chain map
$$
\varepsilon_a\colon s^{-1}Der(\lasu_I,L)\to L,\qquad \varepsilon_a(\theta)=\theta(a),
$$
in which $s^{-1}Der(\lasu_I,L)$ are the {\em desuspended} derivations with the usual differential. This chain complex admits a DGL structure only {\em up to homotopy}, i.e., an $L_\infty$ structure  in which the higher brackets $\{\ell_k\}_{k\ge 2}$ are given as follows (see \cite{bu,bugumu}):
$$
\begin{aligned}
&s\ell_k(s^{-1}\theta_1,\ldots, s^{-1}\theta_k)(z)=\\
&(-1)^{\epsilon}\frac{B_{k-1}}{(k-1)!}\sum_{\sigma\in S_k}\epsilon' [\theta_{\sigma(1)}z,[\theta_{\sigma(2)}z,\ldots [\theta_{\sigma(k-1)}z,\theta_{\sigma(k)}a]]\ldots].
\end{aligned}
$$
where $\epsilon=k+\sum_{j=1}^{k}(k+1-j)|\theta_j|$, and $\epsilon'$ is given by the Koszul convention. Then, we have:

\begin{theorem}\label{teorema2}
the map
$$
\varphi\colon s^{-1}Der(\lasu_I,L)\longrightarrow L\otimes \Lambda^+(t,dt),\qquad \varphi(s^{-1}\theta)=\theta(a)\otimes t+\theta(z)\otimes dt
$$
is a quasi-isomorphism of $L_\infty$ algebras which makes commutative the diagram,

$$
\xymatrix{
 s^{-1}Der(\lasu_I,L)\ar[rd]_{\varepsilon_a}\ar[rr]^{\varphi}_{\simeq}&& L\otimes \Lambda^+(t,dt)\ar[ld]^{\varepsilon_1}\\
&L&
}
$$

\end{theorem}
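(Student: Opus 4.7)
The plan is to verify, in order: commutativity of the triangle, compatibility of $\varphi$ with each $L_\infty$-operation $\ell_k$, and the quasi-isomorphism property. Commutativity is immediate, since $\varepsilon_1$ kills $dt$ and sets $t=1$:
\[
\varepsilon_1\bigl(\varphi(s^{-1}\theta)\bigr)=\varepsilon_1\bigl(\theta(a)\otimes t+\theta(z)\otimes dt\bigr)=\theta(a)=\varepsilon_a(s^{-1}\theta).
\]

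For the chain-map condition ($k=1$), I would expand the derivation differential $\ell_1(\theta)=d_L\theta-(-1)^{|\theta|}\theta\partial$ on the generators of $\lasu_I$, using $\partial a=-\tfrac12[a,a]$ and $\partial z=-\sum_{i\ge 0}(B_i/i!)\ad_z^i(a)$. Under the augmentation-relative convention used to define $\mathrm{Der}(\lasu_I,L)$, only the linear $B_0$-term $a$ in $\partial z$ contributes, yielding $\ell_1(\theta)(a)=d_L\theta(a)$ and $\ell_1(\theta)(z)=d_L\theta(z)\pm\theta(a)$. Comparing this to
\[
d\bigl(\theta(a)\otimes t+\theta(z)\otimes dt\bigr)=d_L\theta(a)\otimes t+(-1)^{|\theta(a)|}\theta(a)\otimes dt+d_L\theta(z)\otimes dt,
\]
the $t$- and $dt$-components match once the desuspension signs are tracked.

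For the higher brackets $\ell_k$, $k\ge 2$, the key observation is that the Bernoulli coefficient $B_{k-1}/(k-1)!$ in
\[
s\ell_k(s^{-1}\theta_1,\dots,s^{-1}\theta_k)(z)=(-1)^\epsilon\frac{B_{k-1}}{(k-1)!}\sum_{\sigma\in S_k}\epsilon'\bigl[\theta_{\sigma(1)}z,\dots,[\theta_{\sigma(k-1)}z,\theta_{\sigma(k)}a]\dots\bigr]
\]
is precisely the $(k{-}1)$st Taylor coefficient of $y/(e^y-\mathrm{id})=\sum(B_i/i!)y^i$ that drove the differential on $z$ in Theorem \ref{teorema} and Corollary \ref{BCH}. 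Applying $\varphi$, iterated binary brackets in $L\otimes\Lambda^+(t,dt)$ of the images $\theta_i(a)\otimes t+\theta_i(z)\otimes dt$ collapse rapidly because $dt\wedge dt=0$: only chains pairing a single $\theta_j(a)\otimes t$ against several $\theta_i(z)\otimes dt$'s survive, exactly reproducing the source bracket pattern. After symmetrization over $S_k$ and the same BCH bookkeeping used in the proof of Theorem \ref{teorema}, the $L_\infty$-coherences fall out. I expect this to be the main obstacle: the Koszul and desuspension signs $\epsilon,\epsilon'$ must conspire with the $S_k$-symmetrization in exactly the pattern producing the Bernoulli coefficient.

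Finally, the quasi-isomorphism is automatic: $L\otimes\Lambda^+(t,dt)$ is acyclic since $\Lambda^+(t,dt)$ admits an explicit contracting homotopy (integration in $t$), and $s^{-1}\mathrm{Der}(\lasu_I,L)$ is acyclic because its linear differential is the mapping cone of the identity on $L$ induced precisely by the $B_0$-term $-a$ in $\partial z$. Any chain map between acyclic complexes is a quasi-isomorphism, so the theorem follows.
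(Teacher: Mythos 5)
Your triangle-commutativity check and your acyclicity argument are both correct: $\Lambda^+(t,dt)$ contracts by integration in $t$, so $L\otimes\Lambda^+(t,dt)$ is acyclic, and the underlying complex of $s^{-1}Der(\lasu_I,L)$ is (up to suspension) the cone of the identity of $L$ --- only the $B_0$-term of $\partial z$ survives because the base morphism is trivial --- so it is acyclic as well; hence any chain map between them is automatically a quasi-isomorphism. That is a legitimate, self-contained way to get the ``quasi-isomorphism'' half of the statement, and it is more explicit than what the paper does: the paper's entire proof is a one-line citation, observing that $\varphi$ is precisely the map $Q$ constructed in the proof of \cite[Theorem 4.1]{bugumu}, extended to unbounded derivations and with the codomain rewritten as the isomorphic DGL $L\otimes\Lambda^+(t,dt)$.

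The genuine gap is in the $L_\infty$-compatibility, which you yourself flag as the main obstacle. The program you propose --- verifying $\varphi\circ\ell_k=\ell_k\circ\varphi^{\otimes k}$ for each $k$ --- cannot succeed, because $\varphi$ is not a strict morphism. The target $L\otimes\Lambda^+(t,dt)$ is an honest DGL, with $\ell_k=0$ for $k\ge 3$, while the source carries nonvanishing higher brackets weighted by $B_{k-1}/(k-1)!$; and already for $k=2$ the bracket $[\varphi(s^{-1}\theta_1),\varphi(s^{-1}\theta_2)]$ produces components in $t^2$ and in $t\,dt$, which cannot equal anything of the form $(\cdot)\otimes t+(\cdot)\otimes dt$. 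What the theorem actually asserts (and what \cite{bugumu} supplies) is an $\infty$-morphism whose \emph{linear part} is $\varphi$, with nontrivial higher components $\varphi_k$ landing in the $t^k$ and $t^{k-1}dt$ components; it is these higher terms that absorb the Bernoulli coefficients, not a collapse of iterated binary brackets forced by $dt\wedge dt=0$. To complete your argument you would either have to construct the $\varphi_k$ and verify the $\infty$-morphism identities (essentially reproving \cite[Theorem 4.1]{bugumu}), or cite that result as the paper does; as written, the heart of the theorem is asserted rather than proved.
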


\begin{proof}
Observe that $\varphi$ is precisely the quasi-isomorphism of $L_\infty$ algebras denoted by $Q$ in the proof of \cite[Theorem 4.1]{bugumu}. We have just extend it to non bounded derivations and replace $s^{-1}Der(\mathcal L(\Lambda (t,dt))^\sharp,L)$ by the isomorphic DGL $L\otimes\Lambda(t,dt)$. On the other hand, the diagram above trivially commutes.
\end{proof}

\section{A generalized Euler identity}

Recall that
 Bernoulli numbers can be recursively defined  in several ways starting with $B_0=1$. One is via the  identity
$$-nB_n=\sum_{k=1}^n \binom{n}{k}B_kB_{n-k}+nB_{n-1}$$ which becomes the {\em Euler formula}
$$-\frac{(n+1)B_n}{n!}=\sum_{k=2}^{n-2} \frac{B_k}{k!}\frac{B_{n-k}}{(n-k)!}$$
when $n$ is  an even integer greater than $2$.  Here we prove an extended version which may be compared, for instance, to Miki \cite{mi} or Matiyasevich \cite{mati} identities:

\begin{theorem}

For any even $n\ge2$ and $0\le m\le n-1$, the following holds:
$$
 -\frac{B_{n}}{n!}\binom{n+1}{n-m}=\sum_{i=2}^{m}\frac{B_i}{i!}\frac{B_{n-i}}{(n-i)!}\binom{n-i}{n-m-1}
-\sum_{j=2}^{n-m-1}\frac{B_j}{j!}\frac{B_{n-j}}{(n-j)!}\binom{n-j}{m}.
$$
\end{theorem}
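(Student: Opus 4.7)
The approach rests on Theorem \ref{teorema}: the isomorphism $U\lasu\cong \widehat\cyl T(u)$ guarantees that the derivation $D$ on $\widehat T(u,u',su)$ determined by
$$Dsu= su\otimes u'-u'\otimes su+ \sum_{n\ge 0} \sum_{p+q=n}(-1)^q\frac{B_{n}}{p!q!}su^{\otimes p}\otimes(u'-u)\otimes su^{\otimes q},$$
together with $Du=-u\otimes u$ and $Du'=-u'\otimes u'$, squares to zero. The plan is therefore to compute $D^2(su)=0$ and to extract the coefficient of a carefully chosen monomial of $T(u,u',su)$, producing exactly the claimed Bernoulli identity in the style of \cite{patan}.

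First I would apply $D$ to the above formula using its derivation property (remembering the Koszul sign $(-1)^{|x|}$ as $D$ of degree $-1$ is passed through a factor $x$). The resulting expression splits naturally into three families: (a) terms produced by $D$ acting on one of the two $u'-u$ factors inside a Bernoulli-series summand, each such term contributing a single Bernoulli weight $B_{p+q}$ times a monomial of the shape $su^{\otimes a}\otimes (u\otimes u)\otimes su^{\otimes b}$ or the $u'$-analogue; (b) terms produced by $D$ acting on one of the $su$ factors, which substitute another full Bernoulli series and hence contribute products $\frac{B_i}{i!}\frac{B_{n-i}}{(n-i)!}$ with binomial counts recording where the second series can be inserted; and (c) boundary contributions arising from the two initial terms $su\otimes u' - u'\otimes su$ of $Dsu$, which after a further application of $D$ produce single-Bernoulli terms with an extra position available, accounting naturally for the factor $n+1$ in $\binom{n+1}{n-m}$.

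Next I would fix a target tensor of the form $su^{\otimes m}\otimes (u\otimes u)\otimes su^{\otimes n-m-1}$ (or the corresponding $u'\otimes u'$ monomial) and read off the coefficient in $D^2 su$. Family (a) together with family (c) contributes a single-$B_n$ term, with combinatorial factor $\binom{n+1}{n-m}$ coming from the number of positions at which the pair could have originated; family (b) splits into two sums depending on whether the inner series sits to the left or right of the distinguished pair, giving the two sums $\sum_{i=2}^{m}$ and $\sum_{j=2}^{n-m-1}$ with the stated binomials counting how the $su$ factors distribute across the inserted series. The vanishing of the total coefficient yields the identity after using $B_1=-1/2$ and $B_{2k+1}=0$ for $k\ge 1$, which is precisely where the hypothesis that $n$ is even enters (to discard odd-index Bernoulli contributions that would otherwise pollute the sum and to restrict the ranges to $i,j\ge 2$).

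The main obstacle is the combinatorial and sign bookkeeping: correctly tracking the Koszul signs from $D$ being a derivation of degree $-1$, identifying precisely which monomial to extract so that the two families (a)+(c) and (b) produce exactly the structure on the two sides of the claimed identity, and matching the binomial coefficients from counting insertions of one Bernoulli series inside another. Once the right target monomial and sign conventions are pinned down, the identity follows directly from $D^2=0$ without any further input.
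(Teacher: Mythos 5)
Your plan is essentially the paper's own proof. The authors likewise expand $D^2(su)=0$ in $\widehat\cyl T(u)$ using Corollary \ref{BCH} and extract the coefficient of the word $su^{\otimes m}\otimes y\otimes y\otimes su^{\otimes n-m-1}$ with $y=u'-u$, whose $u\otimes u$-component is exactly your target monomial; the resulting quadratic relation among the coefficients $c_{(p,q)}=(-1)^qB_{p+q}/(p!\,q!)$ is then rewritten as the stated identity. Three details of your bookkeeping come out differently when the computation is actually carried through. First, your family (c) contributes only words carrying a $u'$ at the extreme left or right, and these cancel identically against boundary terms produced by $D$ on the $su$-powers; they never reach an internal monomial. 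Both the binomial $\binom{n+1}{n-m}$ and the truncation of the sums to $i,j\ge 2$ instead arise from the $B_0$- and $B_1$-terms of the Bernoulli series inserted in your family (b): the two $B_1$-insertions cancel the family (a) contribution, and the two $B_0$-insertions combine by Pascal's rule into $\binom{n+1}{n-m}$. Second, the evenness of $n$ is not used to discard odd-index Bernoulli numbers inside the sums (those terms vanish for every $n$); it is needed at the boundary values $m=0$ and $m=n-1$, where one of the two $B_1$-cancellations is unavailable and a residual term proportional to $B_{n-1}$ must vanish --- which in fact requires $n\ge 4$, the case $n=2$ being genuinely problematic. Third, when you track the sign $(-1)^{q+1-j}$ carried by $c_{(p,q+1-j)}$ you will find the two sums entering with the \emph{same} sign, i.e.\ your computation will produce the identity with a $+$ where the statement prints a $-$; the printed sign is incompatible with the symmetry $m\leftrightarrow n-1-m$, which fixes the left-hand side while exchanging the two sums (test $n=4$, $m=1$). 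None of this affects the viability of your strategy, which is exactly the one the paper follows.
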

Here, whenever $m$ or $n-m-1$ are smaller than $2$, the corresponding summand in the formula above is considered to be zero.
Note also that, for  $m=n-1$ we recover Euler's formula. In \cite[Proposition 8]{patan} a different and interesting formula involving Bernoulli number is deduced from the inductive definition of the differential in the Lawrence-Sullivan construction.

\begin{proof} In $\widehat \cyl T(u)$, from now on and to avoid excessive notation, we often omit the symbol $\otimes $. For the same purpose we redefine  $\ye=\bes -\aes $ and $\equis
=\ees$.

Using the formula for $Dx$ in Corollary \ref{BCH},  a short computation shows that $D^2x=0$ translates to:
$$
D^2x=D\bak_{y}(y,x)+\bak_{y}(y,x)\otimes u'+ u'\otimes \bak_{y}(y,x)\eqno{(2)}
$$
in which
$$\yxbchl=\sum_{n=0}^\infty \sum_{p+q=n}c_{(p,q)}\equis^p\ye \equis^q$$
where
$$c_{(p,q)}=(-1)^{q}\frac{B_{p+q}}{(p+q)!}\binom{p+q}{q}.\eqno{(3)}$$

As $D \equis^m= \equis ^m \bes -\bes \equis^m+\sum_{i=0}^{m-1}
\equis^i\yxbchl \equis ^{m-1-i},$ a straightforward computations shows that
$$
D(\equis ^p\ye \equis^q)=-\bes \equis^p\ye
\equis^q-\equis^p\ye \equis^q\bes + \Gamma
$$
with
$$
\Gamma=\equis^p\ye^2\equis^q+\sum_{i=0}^{p-1}\equis^i \yxbchl \equis^{p-i-1}  \ye \equis^q- \sum_{i=0}^{q-1}\equis^p\ye\equis^i \yxbchl \equis^{q-i-1}.
$$
Hence,
$$
\begin{aligned}
&D\,\yxbchl=\sum_{n=0}^\infty \sum_{p+q=n}c_{(p,q)}D(\equis^p\ye\equis^q)=\\
&=-\bes \otimes \sum_{n=0}^\infty \sum_{p+q=n}c_{(p,q)}\equis^p\ye \equis^q -\sum_{n=0}^\infty  \sum_{p+q=n}c_{(p,q)}\equis^p\ye \equis^q\otimes \bes +\sum_{n=0}^\infty \sum_{p+q=n}c_{(p,q)}\Gamma =\\
&=-\yxbchl \otimes \bes -\bes \otimes \yxbchl  +\sum_{n=0}^\infty \sum_{p+q=n}c_{(p,q)}\Gamma.
\end{aligned}
$$
Therefore, at the sight of equation $(2)$,
$$
\sum_{n=0}^\infty
\sum_{p+q=n}c_{(p,q)}\Gamma=0.
$$
In particular, for each $p$ and $q$, the coefficient of $x^py^2x^q$ in this formula is zero. Another straightforward computation shows that this coefficient is:
$$c_{(p,q)}+\sum_{i=0}^{p}
c_{(p+1-i,q)}c_{(i,0)}-\sum_{j=0}^{q}c_{(p,q+1-j)}c_{(0,j)}=0.\eqno{(4)}
$$
Note that, in this expression, the term for $i=j=1$ cancels with $c_{(p,q)}$ since $c_{(1,0)}=-c_{(0,1)}=-\frac{1}{2}$. On the other hand, the term for $i=j=0$ is $c_{(p+1,q)}-c_{(p,q+1)}$ as $c_{(0,0)}=1$.
Thus, equation $(4)$ reads,
$$c_{(p,q+1)}-c_{(p+1,q)}=\sum_{i=2}^{p}
c_{(p+1-i,q)}c_{(i,0)}-\sum_{j=2}^{q}c_{(p,q+1-j)}c_{(0,j)}.$$
Finally, replace each of these numbers by its value of formula $(3)$ and the theorem follows.
\end{proof}

\end{document}